\newcommand{\be}{\begin{equation}}
\newcommand{\ee}{\end{equation}}
\newcommand{\beq}{\begin{eqnarray}}
\newcommand{\eeq}{\end{eqnarray}}
\def\R{{\mathfrak R}}
\newtheorem{prop}{Proposition}[section]
\newtheorem{theo}[prop]{Theorem}
\newtheorem{lemm}[prop]{Lemma}
\newtheorem{rema}[prop]{Remark}
\def\begeq{\begin{equation}}
\def\endeq{\end{equation}}
\def\R{\mathbb R}
\def\tr{{\rm tr}}
\def\<{\langle}
\def\>{\rangle}
\def\d{\delta}
\def\s{\sigma}
\def \ds{\displaystyle}
\def\S{\mathbb  {S}}
\def\odot{\setbox0=\hbox{$\bigcirc$}\relax \mathbin {\hbox
to0pt{\raise.5pt\hbox to\wd0{\hfil $\wedge$\hfil}\hss}\box0 }}
\numberwithin{equation} {section}
\begin{document}

\title[A note on new weighted geometric inequalities]{A note on new weighted geometric inequalities for hypersurfaces in $\mathbb R^n$}

\author{Jie Wu}
\address{Department of Mathematics, Zhejiang University, Hangzhou 310027, P. R. China}
\email{wujiewj@zju.edu.cn}

\begin{abstract} In this note, we prove a family of sharp weighed inequality which involves weighted $k$-th mean curvature integral and two distinct quermassintegrals for closed hypersurfaces in $\mathbb{R}^n$. This inequality generalizes the corresponding result of Wei and Zhou \cite{WZ} where their proof is based on earlier results of Kwong-Miao \cite{KM1,KM2}. Here we present a proof which does not rely on Kwong-Miao's results.
\end{abstract}

%\thanks{Jie Wu is supported by NSF of China under Grant No. 11731001.}

\subjclass[2000]{53A07, 53C44.}

\keywords{weighted geometric inequalities,  quermassintegral, normalized inverse curvature flow}

\maketitle

%\tableofcontents

\
%\chapter{chapter}

\section{Introduction}

The Alexandrov-Fenchel inequalities for quermassintegrals of convex domains in $\mathbb{R}^n$, as a generalization of the isoperimetric inequality, play an important role in classical geometry.
Let $\Omega$ be a smooth bounded domain in $\R^n$ with boundary $\partial\Omega = \Sigma $.
The classical isoperimetric inequality is
\beq\label{eq001}
\frac{|\Sigma|}{\omega_{n-1}}\geq \bigg(\frac{Vol(\Omega)}{\frac{\omega_{n-1}}n}\bigg)^{\frac{n-1}{n}},
\eeq
where $|\Sigma|$ denotes the area of $\Sigma$ and $\omega_{n-1}$ is the area of the unit sphere $\S^{n-1}$. Equality in (\ref{eq001}) holds if and only if $\Omega$ is a geodesic ball.
Let $\kappa=(\kappa_1, \kappa_2, \cdots , \kappa_{n-1})$ be the principal curvatures of $\Sigma $ and for any integer $1\leq k\leq n-1$, define the $k$-th mean curvature $\sigma_k$ and the normalized $k$-th mean curvature $H_k$ of $\Sigma$ by
 \[\s_k=\sum_{1\leq i_1<\cdots<i_{k}\leq{n-1}}\kappa_{i_1}\cdots\kappa_{i_k}\quad  \hbox{and}\quad H_k=\frac{\sigma_k}{C^{k}_{n-1}}.\]
Here we make a convention that $\s_0=H_0=1$ and $\s_k=H_k=0$ for $k\geq n$.
 The celebrated Alexandrov-Fenchel quermassintegral inequalities \cite{AF1,AF2} state that
\begin{equation}\label{eq003}
\frac 1{\omega_{n-1}}\int_{\Sigma} H_kd\mu \ge
\left(\frac 1{\omega_{n-1}}\int_{\Sigma} H_{j} d\mu \right)^{\frac {n-1-k}{n-1-j}}, \quad 0\le j< k\le  n-1,
\end{equation}
for any convex hypersurface  $\Sigma$ and equality holds in (\ref{eq003}) if and only if $\Sigma$ is a geodesic sphere.
By using the inverse curvature flows, Guan and Li \cite{GuanLi} extended the Alexandrov-Fenchel inequalities (\ref{eq003}) to star-shaped and $k$-convex hypersurfaces.
Recently, there is of great interest to establish the weighted geometric inequalities comparing the weighted curvature integrals and the quermassintegrals in $\R^n$.

Though out the paper, let $r$ be the Euclidean distance to any given point $O\in\R^n$.
We say that $\Sigma$ in $\R^n$ is called star-shaped, if its support function $u=\langle X, \nu\rangle$ is positive everywhere on $\Sigma,$ where $X$ is the position function of $\Sigma$ and $\nu$ is the unit outer-normal of $\Sigma$  at $X$.  Here a hypersurface is called $k$-convex if its principal curvatures satisfy $H_i > 0$ for all $1\leq i\leq k$.
In particular, $1$-convex is also called mean convex.

Applying the Reilly's  formula \cite{Reilly}, Kwong and Miao \cite{KM1} first obtained a sharp inequality relating $r^2$-weighted mean curvature and the enclosed volume for a convex hypersurface and then extend it to a star-shaped, mean-convex hypersurface through the inverse mean curvature flow. Precisely, they showed the following inequality
\begin{equation}\label{Miao1}
\int_{\Sigma}r^2 H_1\geq n Vol(\Omega),
\end{equation}
if $\Sigma$ is star-shaped and mean convex. Equality holds if and only if $\Sigma$ is a geodesic sphere.

By using the  inequality (\ref{Miao1}) and the inverse mean curvature flow, Gir\~{a}o and Rodrigues \cite{GR} proved for a star-shaped and mean convex hypersurface that
\begin{equation}\label{Girao}
\int_{\Sigma}r^2 H_1\geq \omega_{n-1}\bigg(\frac{|\Sigma|}{\omega_{n-1}}\bigg)^{\frac{n}{n-1}}.
\end{equation}
Inequality (\ref{Girao}) can be seen as an improvement of (\ref{Miao1}) by the classic isoperimetric equality (\ref{eq001}).

For higher order mean curvature integrals, Kwong and Miao \cite{KM2} also established a family of weighted inequalities as following:
\begin{theo}[\cite{KM2}]
Let $\Sigma$ be a smooth, closed, star-shaped and k-convex hypersurface in $\R^n(n\geq 3)$.
Then for all $k=2,\cdots, n-1$, there holds
\begin{equation}\label{Miao2}
       \int_{\Sigma}r^2 H_k\geq \int_{\Sigma} H_{k-2}.
     \end{equation}
     Moreover, the equality holds if and only if $\Sigma$ is a round sphere.
\end{theo}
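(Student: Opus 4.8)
The plan is to deform $\Sigma$ to a round sphere centred at the fixed point $O$ by a locally constrained inverse curvature flow, and to follow along the flow the defect
\[
\mathcal F(t)\ :=\ \int_{\Sigma_t}r^2H_k\,d\mu\ -\ \int_{\Sigma_t}H_{k-2}\,d\mu .
\]
The first remark is elementary: on a round sphere $S_\rho$ of radius $\rho$ centred at $O$ we have $r\equiv\rho$ and $H_j=\rho^{-j}$, so $\int_{S_\rho}r^2H_k\,d\mu=\rho^{2-k}|S_\rho|=\int_{S_\rho}H_{k-2}\,d\mu$, i.e.\ $\mathcal F=0$ there; for a sphere centred at a point $p\neq O$ the left integral is larger by the factor $1+|p|^2/\rho^2$. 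So it suffices to show that $\mathcal F$ relaxes to $0$ along the flow, and the equality case will be forced to be a sphere centred at $O$.

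For the flow I would use the Guan-Li type locally constrained flow
\[
\frac{\partial X}{\partial t}=\Big(\frac{H_{k-1}}{H_k}-u\Big)\nu ,\qquad u=\langle X,\nu\rangle
\]
(or the variant with $H_{k-2}/H_{k-1}$ in place of $H_{k-1}/H_k$, which instead preserves $\int H_{k-2}\,d\mu$). From the Minkowski identity $\int_{\Sigma_t}uH_k\,d\mu=\int_{\Sigma_t}H_{k-1}\,d\mu$ one checks that the displayed flow preserves the quermassintegral $\int_{\Sigma_t}H_{k-1}\,d\mu$; by the now-standard theory for such flows (cf.\ \cite{GuanLi}) it exists for all time, keeps $\Sigma_t$ star-shaped and $k$-convex, and---because of the inward $-u\nu$ term---converges smoothly and exponentially to the round sphere centred at $O$ carrying the prescribed value of $\int H_{k-1}\,d\mu$. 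In particular $\mathcal F(t)\to 0$ as $t\to\infty$.

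The computational core is the evolution of $\mathcal F$. Writing the speed as $\eta=H_{k-1}/H_k-u$, the standard variation formulas together with $\div P_{k-1}=0$ (where $P_{k-1}$ is the $(k-1)$-st Newton tensor) give $\frac{d}{dt}\int_{\Sigma_t}H_j\,d\mu=(n-1-j)\int_{\Sigma_t}H_{j+1}\,\eta\,d\mu$. For the weighted integral one uses the Hessian identity $\nabla^2\!\big(\tfrac{r^2}{2}\big)=g-u\,h$ on $\Sigma_t$, which gives $P_{k-1}^{ij}\nabla_i\nabla_j(r^2)=2\big((n-k)\sigma_{k-1}-ku\sigma_k\big)$; inserting this into the evolution of $\int r^2\sigma_k\,d\mu$ and integrating by parts twice expresses $\frac{d}{dt}\int_{\Sigma_t}r^2H_k\,d\mu$ through integrals of $u^2H_k$, $H_{k-1}^2/H_k$, $uH_{k-1}$ and $r^2H_{k+1}\eta$. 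Two identities then clean this up: the Minkowski identity $\int_{\Sigma_t}uH_{k-1}\,d\mu=\int_{\Sigma_t}H_{k-2}\,d\mu$, and a weighted Minkowski identity obtained by feeding the test function $u$ (for which $\nabla_i u=h_{il}\nabla^l(r^2/2)$) into the divergence identity for $P_{k-1}$; the latter rewrites $\int u^2H_k\,d\mu$ in terms of $\int H_{k-2}\,d\mu$, $\int r^2H_k\,d\mu$ and $\int r^2uH_{k+1}\,d\mu$. After these substitutions several terms cancel.

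The main obstacle is to read off the sign of the resulting expression for $\mathcal F'(t)$. I expect it to reduce, via the pointwise Newton-MacLaurin inequalities $H_{k-1}^2\ge H_kH_{k-2}$ and $H_k^2\ge H_{k-1}H_{k+1}$ (equivalently, $H_{j+1}/H_j$ non-increasing in $j$), and if necessary the Alexandrov-Fenchel inequality \eqref{eq003} for star-shaped $k$-convex hypersurfaces, to a differential inequality of the form $\mathcal F'(t)\le-c\,\mathcal F(t)$ with $c>0$. This by itself does not fix the sign of $\mathcal F$, so one combines it with the exponential convergence of the previous paragraph: the slowest-decaying normal modes on the limiting sphere are the translations of the centre, and in those directions $\mathcal F$ is manifestly non-negative (to leading order a positive multiple of the squared distance from $O$ of the centre of $\Sigma_t$), so $e^{ct}\mathcal F(t)$ has a non-negative limit; being non-increasing by the differential inequality, it is therefore non-negative for all $t$, and in particular $\mathcal F(0)\ge 0$. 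This is exactly $\int_\Sigma r^2H_k\,d\mu\ge\int_\Sigma H_{k-2}\,d\mu$. For the equality statement: equality at $t=0$ forces $\mathcal F\equiv 0$, hence equality in all the Newton-MacLaurin inequalities used along the flow, so each $\Sigma_t$ is totally umbilic, i.e.\ a round sphere; and $\mathcal F=0$ on a round sphere forces its centre to be $O$. The converse is the computation in the first paragraph.
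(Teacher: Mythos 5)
Your setup --- the locally constrained flow \eqref{flow1}, the conservation of $\int_{\Sigma_t}H_{k-1}\,d\mu$ via \eqref{Minkowski}, the divergence identity for the Newton tensor and the weighted Minkowski identities with test functions $r^2$ and $u$ --- is exactly the machinery the paper uses, and your verification that $\mathcal{F}=0$ precisely on spheres centred at $O$ is correct. The gap is in the step that actually produces the sign of $\mathcal{F}(0)$. You aim for a differential inequality $\mathcal{F}'(t)\le -c\,\mathcal{F}(t)$ with a uniform $c>0$ and then try to recover the sign from a spectral picture at the limit sphere (``the slowest modes are translations, on which $\mathcal{F}\ge 0$, hence $e^{ct}\mathcal{F}(t)$ has a non-negative limit''). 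This requires two things you do not establish: (i) a uniform constant $c>0$ valid along the entire, non-perturbative flow, which does not fall out of the Newton--MacLaurin manipulations you list; and (ii) asymptotics of $\mathcal{F}(t)$ accurate to order $e^{-ct}$ with $c$ matching the slowest decay rate of the linearized flow exactly --- if $c$ overshoots that rate, $\lim_{t\to\infty}e^{ct}\mathcal{F}(t)$ need not exist or be finite, and the argument concludes nothing. As written, the proof does not close.

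The fix is that the \emph{weaker} statement $\mathcal{F}'(t)\le 0$ is both true and sufficient: combined with $\mathcal{F}(t)\to 0$ it gives $\mathcal{F}(0)\ge 0$ with no mode analysis. And it follows from the computation you already set up. By \eqref{variation} with index $k-1$, the Newton--MacLaurin inequality $H_{k-1}^2\ge H_kH_{k-2}$ and the Minkowski formula \eqref{Minkowski}, one has $\frac{d}{dt}\int_{\Sigma_t}H_{k-2}\,d\mu=(n-k+1)\int_{\Sigma_t}\bigl(H_{k-1}^2/H_k-uH_{k-1}\bigr)d\mu_t\ge 0$, while the paper's main computation \eqref{eq5} shows that $\int_{\Sigma_t}r^2H_k\,d\mu_t+\frac{2(k-1)}{n+1-k}\int_{\Sigma_t}H_{k-2}\,d\mu_t$ is non-increasing; since $\mathcal{F}$ equals that quantity minus $\frac{n+k-1}{n+1-k}\int_{\Sigma_t}H_{k-2}\,d\mu_t$, it is non-increasing. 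The paper itself reaches \eqref{Miao2} by a slightly different but equally soft route: it proves monotonicity of the scale-invariant quantity $Q_k$, deduces \eqref{W1}, and then obtains \eqref{Miao2} from \eqref{W1} together with the Guan--Li Alexandrov--Fenchel inequality \eqref{eq003} (the original Kwong--Miao argument uses a generalized Hsiung--Minkowski formula and no flow at all). Your equality discussion should also be rerun after this repair: once plain monotonicity is in place, equality forces rigidity in the Newton--MacLaurin steps (hence umbilicity) and $\mathcal{F}=0$ then pins the centre at $O$, which is the intended conclusion.
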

The method to prove (\ref{Miao2}) is the use of a generalized Hsiung-Minkowski formula\cite{Kwong}.

More recently, by using (\ref{Miao2}) and the inverse curvature flows, Wei and Zhou \cite{WZ} established a sharp weighted geometric inequality with the right hand-side of inequality (\ref{Miao2}) replaced by $\int_{\Sigma}H_{k-1}$. These inequalities can be seen as the analogs of (\ref{Girao}) for general $k$.

\begin{theo}[\cite{WZ}]
Let $\Sigma$ be a smooth, closed, star-shaped and strictly k-convex hypersurface in $R^n(n\geq 3)$.
Then for all $k = 2,\cdots, n-1 $, there holds
\begin{equation}\label{Wei1}
       \int_{\Sigma}r^2 H_k\geq \omega_{n-1}\bigg(\frac{\int _{\Sigma} H_{k-1}}{\omega_{n-1}}\bigg)^{\frac{n-k+1}{n-k}}.
\end{equation}
Moreover, the equality holds if and only if $\Sigma$ is a round sphere.
\end{theo}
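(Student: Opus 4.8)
The plan is to prove (\ref{Wei1}) by running a normalized inverse curvature flow from $\Sigma$ and showing that a scale-invariant quotient decreases along it, with limiting value no smaller than its value on a round sphere. Let $\Sigma_t$ solve $\partial_t X=\frac{\sigma_{k-1}}{\sigma_k}\,\nu$ with $\Sigma_0=\Sigma$. Since $\Sigma$ is star-shaped and strictly $k$-convex, by the theory of inverse curvature flows for such hypersurfaces (see \cite{GuanLi} and the references therein) the flow exists for all $t\ge0$, stays star-shaped and strictly $k$-convex, and the rescaled hypersurfaces $e^{-\frac{k}{n-k}t}\Sigma_t$ converge smoothly to a round sphere $S$, in general not centred at $O$. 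As (\ref{Wei1}) is invariant under $X\mapsto\lambda X$, using the unnormalized flow costs nothing.

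\emph{Two evolution facts.} Let $\dot\sigma_k^{ij}=\partial\sigma_k/\partial h_{ij}$ be the $k$-th Newton tensor, which is divergence free in $\R^n$, and recall $\partial_t(d\mu)=\frac{\sigma_{k-1}}{\sigma_k}\,\sigma_1\,d\mu$ together with the standard evolution equations for $\nu$, $h_{ij}$ and $\sigma_k$. Integrating by parts using $\nabla_i\dot\sigma_k^{ij}=0$ gives $\frac{d}{dt}\int_{\Sigma_t}\sigma_{k-1}\,d\mu=k\int_{\Sigma_t}\sigma_{k-1}\,d\mu$, hence $\int_{\Sigma_t}H_{k-1}\,d\mu=e^{kt}\int_\Sigma H_{k-1}\,d\mu$, so the right-hand side of (\ref{Wei1}) grows exactly like $e^{\frac{k(n-k+1)}{n-k}t}$. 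The crux of the proof is the differential inequality
\[
\frac{d}{dt}\int_{\Sigma_t} r^2 H_k\,d\mu\ \le\ \frac{k(n-k+1)}{n-k}\int_{\Sigma_t} r^2 H_k\,d\mu .
\]

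\emph{Proof of the differential inequality.} Using $\partial_t(r^2)=2\frac{\sigma_{k-1}}{\sigma_k}u$ with $u=\langle X,\nu\rangle$, the evolution equations for $\sigma_k$ and $d\mu$, and integrating by parts twice (via $\nabla_i\dot\sigma_k^{ij}=0$ and the identity $\nabla_i\nabla_j(\tfrac12 r^2)=g_{ij}-u\,h_{ij}$), one computes
\begin{multline*}
\frac{d}{dt}\int_{\Sigma_t} r^2\sigma_k\,d\mu
=2(k+1)\int_{\Sigma_t} u\,\sigma_{k-1}\,d\mu\\
-2(n-k)\int_{\Sigma_t}\frac{\sigma_{k-1}^2}{\sigma_k}\,d\mu
+(k+1)\int_{\Sigma_t}\frac{r^2\sigma_{k-1}\sigma_{k+1}}{\sigma_k}\,d\mu .
\end{multline*}
Estimating the last term by the Newton--MacLaurin inequality $H_{k-1}H_{k+1}\le H_k^2$ reduces the differential inequality to the static inequality
\[
\int_{\Sigma} r^2 H_k\,d\mu\ \ge\ (k+1)\int_{\Sigma} u\,H_{k-1}\,d\mu-k\int_{\Sigma}\frac{H_{k-1}^2}{H_k}\,d\mu ,
\]
which is sharp exactly on spheres centred at $O$. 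To prove it, write $r^2=u^2+|X^\top|^2$, factor $u^2 H_k-(k+1)uH_{k-1}+k\tfrac{H_{k-1}^2}{H_k}=\tfrac1{H_k}(uH_k-H_{k-1})(uH_k-kH_{k-1})$, and use that $\varphi:=uH_k-H_{k-1}$ has zero average by the Hsiung--Minkowski formula $\int_\Sigma uH_k\,d\mu=\int_\Sigma H_{k-1}\,d\mu$; the inequality then becomes
\[
\int_\Sigma\frac{\varphi^2}{H_k}\,d\mu+\int_\Sigma|X^\top|^2 H_k\,d\mu\ \ge\ (k-1)\int_\Sigma\frac{\varphi\, H_{k-1}}{H_k}\,d\mu ,
\]
a bound on a zero-average correlation by two manifestly nonnegative terms, which one would attack with a second-order Minkowski identity obtained by integrating $\div\!\big(\psi\,\dot\sigma_k^{ij}(X^\top)_j\big)$ against well-chosen scalars $\psi$.

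\emph{Conclusion.} It follows that $Q(t):=\int_{\Sigma_t}r^2 H_k\,d\mu\big/\big(\omega_{n-1}(\omega_{n-1}^{-1}\int_{\Sigma_t}H_{k-1}\,d\mu)^{\frac{n-k+1}{n-k}}\big)$ is non-increasing. Being scale-invariant and continuous under smooth convergence of hypersurfaces, $Q(t)=Q(e^{-\frac{k}{n-k}t}\Sigma_t)\to Q(S)$, and a direct computation gives $Q(S)=(d^2+\rho^2)/\rho^2\ge1$ for a round sphere of radius $\rho$ whose centre lies at distance $d$ from $O$, with equality iff $d=0$. Hence $Q(0)\ge\lim_{t\to\infty}Q(t)=Q(S)\ge1$, which is (\ref{Wei1}); if equality holds then $Q\equiv1$ along the flow, equality in Newton--MacLaurin makes each $\Sigma_t$ totally umbilic, hence a round sphere, and then $Q(0)=1$ forces $\Sigma$ to be a sphere centred at $O$. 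I expect the static inequality above to be the main obstacle: it is exactly the step at which Wei and Zhou \cite{WZ} invoked Kwong--Miao's (\ref{Miao2}) together with the Alexandrov--Fenchel inequality (\ref{eq003}), and the sign analysis it requires for merely $k$-convex hypersurfaces, where the relevant curvature quadratic forms need not be positive definite, is the delicate point.
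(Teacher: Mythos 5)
Your strategy --- run the unnormalized Gerhardt--Urbas flow, show the scale-invariant quotient $Q(t)$ is non-increasing, and reduce the required differential inequality via the variational formulas to a static integral inequality --- is viable, and every reduction you carry out checks against the paper's formulas (\ref{variation}) and (\ref{Vpk}). But as submitted the proof is incomplete at exactly the point you flag: the static inequality
\[
\int_{\Sigma}r^2H_k\,d\mu\ \ge\ (k+1)\int_{\Sigma}uH_{k-1}\,d\mu-k\int_{\Sigma}\frac{H_{k-1}^2}{H_k}\,d\mu
\]
is only rewritten as
\[
\int_{\Sigma}\frac{\varphi^2}{H_k}\,d\mu+\int_{\Sigma}|X^\top|^2H_k\,d\mu\ \ge\ (k-1)\int_{\Sigma}\frac{\varphi H_{k-1}}{H_k}\,d\mu,\qquad \varphi:=uH_k-H_{k-1},
\]
and the proposed attack (a ``second-order Minkowski identity'' against well-chosen scalars) is not carried out; that is a genuine gap. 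The obstacle you anticipate, however, is not there: the right-hand side of your last display is non-positive. Indeed,
\[
\int_{\Sigma}\frac{\varphi H_{k-1}}{H_k}\,d\mu=\int_{\Sigma}uH_{k-1}\,d\mu-\int_{\Sigma}\frac{H_{k-1}^2}{H_k}\,d\mu=\int_{\Sigma}H_{k-2}\,d\mu-\int_{\Sigma}\frac{H_{k-1}^2}{H_k}\,d\mu\ \le\ 0,
\]
where the second equality is the Minkowski formula (\ref{Minkowski}) with $k$ replaced by $k-1$, and the inequality is the pointwise Newton--MacLaurin inequality $H_{k-1}^2\ge H_{k-2}H_k$ together with $H_k>0$. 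Since the left-hand side is manifestly nonnegative and $k\ge 1$, the static inequality follows in one line, with equality forcing $|X^\top|\equiv 0$, i.e.\ $\Sigma$ a sphere centred at $O$ (which also settles rigidity directly; note that for $k=n-1$ the Newton--MacLaurin term in the evolution carries the factor $n-1-k=0$, so umbilicity cannot be read off from it, but it is not needed). With this addition your argument closes and, like the paper, avoids Kwong--Miao's result (\ref{Miao2}); in fact your static inequality is precisely what Wei--Zhou extracted from (\ref{Miao2}) via (\ref{Minkowski}) and Newton--MacLaurin, so you have in effect shown that this weaker consequence of (\ref{Miao2}) is elementary.

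For comparison, the paper proceeds differently: it runs the normalized flow (\ref{flow1}) so that $\int_{\Sigma_t}H_{k-1}\,d\mu$ is constant, and proves monotonicity of the stronger quantity $\int_{\Sigma_t}r^2H_k\,d\mu+\frac{2(k-1)}{n+1-k}\int_{\Sigma_t}H_{k-2}\,d\mu$. The sign analysis there is done not pointwise but through the Newton-tensor identities (\ref{eq1})--(\ref{eq2}) and the iteration $T_k=\sigma_k I-T_{k-1}h$, which leave only the manifestly nonpositive remainder $-\frac12\int_{\Sigma_t}H_k|\nabla r^2|^2d\mu_t$. This yields the stronger inequality (\ref{W1}), from which (\ref{Wei1}) follows by Alexandrov--Fenchel (\ref{eq003}). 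Your route, once patched as above, is more elementary but proves only (\ref{Wei1}) itself, not (\ref{W1}).
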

Inequality(\ref{Wei1}) also improves (\ref{Miao2}) since (\ref{Wei1}) implies (\ref{Miao2}) by the Alexandrov-Fenchel inequality for star-shaped and $k$-convex hypersurfaces. By the same reason, they acturally obtained a complete family of sharp weighted inequalities following from (\ref{Wei1}) with the right hand-side of (\ref{Miao2}) replaced by suitable power of $\int_{\Sigma}H_{l}\;(l\leq k-1)$  as stated in their paper (see Theorem 1.2 in \cite{WZ}).

However, we notice that the two proofs of inequalities (\ref{Wei1})  and (\ref{Girao}) both relies on (\ref{Miao2}) and (\ref{Miao1}) respectively. It is natural to ask  if one can give a direct proof of (\ref{Wei1})  and (\ref{Girao}) without using Kwong-Miao's results (\ref{Miao2}) and (\ref{Miao1}). We give an affirmative answer in this short note. Motivated by \cite{GuanLi}, we observe that it is more convenient to use the following normalized inverse curvature flow
\begin{equation}\label{flow1}
\frac{\partial}{\partial t} X=\bigg(\frac {H_{k-1}}{H_k}-u\bigg)\nu,
\end{equation}
rather than the un-normalized one. It's well-known that flow (\ref{flow1}) is equivalent to
the inverse curvature flow
\begin{equation}\label{flow2}
\frac{\partial}{\partial t} X=\frac {H_{k-1}}{H_k}\nu,
\end{equation}
introduced by Gerhardt \cite{Gerhardt} and Urbas \cite{Urbas} by rescaling.

Furthermore, by using (\ref{flow1}), we are able to establish a new kind of weighed inequality which involves weighted $k$-th mean curvature integral as well as two distinct quermassintegrals. This inequality improves (\ref{Wei1}) by the Alexandrov-Fenchel inequality (\ref{eq003})  for star-shaped and $k$-convex hypersurfaces.

\begin{theo}
Let $\Sigma$ be a smooth, closed, star-shaped and $k$-convex hypersurface in $\mathbb{R}^n(n\geq 3)$.
Then for each $k= 1,\cdots, n-1,$ there holds
\begin{equation}\label{W1}
       \int_{\Sigma}r^2 H_k d\mu +\frac{2(k-1)}{n-k+1}\int_{\Sigma} H_{k-2} d\mu \geq \frac{n+k-1}{n-k+1} \omega_{n-1}\bigg(\frac{\int _{\Sigma} H_{k-1}d\mu}{\omega_{n-1}}\bigg)^{\frac{n-k+1}{n-k}}.
\end{equation}
Moreover, equality holds in (\ref{W1}) if and only if $\Sigma$ is a round sphere.
\end{theo}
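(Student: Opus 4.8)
The plan is to evolve $\Sigma_0=\Sigma$ by the normalized inverse curvature flow (\ref{flow1}) and to prove that the functional
\[
\mathcal{Q}(t):=\int_{\Sigma_t}r^2H_k\,d\mu+\frac{2(k-1)}{n-k+1}\int_{\Sigma_t}H_{k-2}\,d\mu
\]
is non-increasing in $t$, while $\int_{\Sigma_t}H_{k-1}\,d\mu$ stays constant along the flow. By Gerhardt \cite{Gerhardt} and Urbas \cite{Urbas}, the flow (\ref{flow2}), hence its rescaling (\ref{flow1}), exists for all time, preserves star-shapedness and $k$-convexity, and $\Sigma_t$ converges smoothly to a round sphere as $t\to\infty$. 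Since $\int_{\Sigma_t}H_{k-1}\,d\mu$ is conserved, the limit sphere has radius $\rho_\infty$ with $\omega_{n-1}\rho_\infty^{\,n-k}=\int_\Sigma H_{k-1}\,d\mu$, and a direct computation gives $\mathcal{Q}(\infty)=\frac{n+k-1}{n-k+1}\,\omega_{n-1}\rho_\infty^{\,n-k+1}$, which is exactly the right-hand side of (\ref{W1}); a limit sphere centered away from $O$ would only increase $\mathcal{Q}(\infty)$, so in all cases $\mathcal{Q}(0)\ge\mathcal{Q}(\infty)\ge$ (the right-hand side of (\ref{W1})).

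To set things up I would first record the first variation formulas along (\ref{flow1}), whose speed I abbreviate by $\phi:=\frac{H_{k-1}}{H_k}-u$: one has $\frac{d}{dt}\int_{\Sigma_t}H_j\,d\mu=(n-1-j)\int_{\Sigma_t}H_{j+1}\phi\,d\mu$, and taking $j=k-1$ together with the Hsiung--Minkowski identity $\int_{\Sigma_t}H_{k-1}\,d\mu=\int_{\Sigma_t}uH_k\,d\mu$ shows $\frac{d}{dt}\int_{\Sigma_t}H_{k-1}\,d\mu=0$. This conservation law is precisely why it is convenient to use the normalized flow (\ref{flow1}) rather than (\ref{flow2}). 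For the weighted term I would use $\partial_t(r^2)=2u\phi$, the Hessian identity $\Hess_{\Sigma_t}(r^2)=2(g-uh)$, and the divergence-freeness of the Newton transformations $P_{k-1},P_k$ to obtain an explicit formula for $\frac{d}{dt}\int_{\Sigma_t}r^2H_k\,d\mu$ involving the quantities $\int uH_{k-1}$, $\int u^2H_k$, $\int H_{k-1}^2/H_k$, $\int r^2H_{k-1}H_{k+1}/H_k$ and $\int r^2uH_{k+1}$.

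The heart of the proof is to show $\mathcal{Q}'(t)\le 0$. In the formula just obtained I would (i) estimate $H_{k-1}H_{k+1}\le H_k^2$ by the Newton--MacLaurin inequality, (ii) remove $\int r^2uH_{k+1}$ via the weighted Minkowski identity $\int_{\Sigma_t}\div\!\big(r^2P_k X^\top\big)\,d\mu=0$, and (iii) remove the resulting $\int\langle X^\top,P_kX^\top\rangle$ via the identity $\int_{\Sigma_t}\div\!\big(uP_{k-1}X^\top\big)\,d\mu=0$ combined with $hP_{k-1}=\sigma_k I-P_k$ and $|X^\top|^2=r^2-u^2$. After adding $\frac{2(k-1)}{n-k+1}\frac{d}{dt}\int_{\Sigma_t}H_{k-2}\,d\mu$, the coefficients of all the $r^2$-weighted terms conspire (so that $\int r^2H_k$ only reappears with a favourable sign), and $\mathcal{Q}'(t)$ should collapse to
\[
\mathcal{Q}'(t)\le 4\int_{\Sigma_t}uH_{k-1}\,d\mu-2\int_{\Sigma_t}\frac{H_{k-1}^2}{H_k}\,d\mu-2\int_{\Sigma_t}r^2H_k\,d\mu,
\]
whose right-hand side is $\le 0$ by the pointwise inequality $2uH_{k-1}\le u^2H_k+H_{k-1}^2/H_k$ together with $u\le r$ (Cauchy--Schwarz in $\R^n$, valid since $\Sigma_t$ is star-shaped so $u>0$). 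Hence $\mathcal{Q}(0)\ge\mathcal{Q}(\infty)$, which is (\ref{W1}).

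For the equality case, if equality holds in (\ref{W1}) then $\mathcal{Q}$ is constant in $t$, so equality must hold in each estimate above for every $t$; the equality $u\equiv r$ forces $X^\top\equiv 0$, hence $r$ is constant on each $\Sigma_t$ and $\Sigma=\Sigma_0$ is a round sphere centered at $O$, which conversely is a stationary solution of (\ref{flow1}) for which (\ref{W1}) is an equality. The step I expect to be the main obstacle is exactly the monotonicity of $\mathcal{Q}$: pinning down the correct auxiliary multiple of $\int H_{k-2}\,d\mu$ and the right pair of weighted Minkowski identities so that all the $r^2$-weighted contributions cancel. Once that bookkeeping is carried out, the remaining ingredients --- the first variation formulas, the Newton--MacLaurin inequality, and the long-time existence and smooth convergence of (\ref{flow1}) --- are standard, so I do not anticipate further serious difficulties.
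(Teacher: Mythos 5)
Your proposal is correct and follows essentially the same route as the paper: the normalized flow (\ref{flow1}), conservation of $\int_{\Sigma_t}H_{k-1}\,d\mu$ via the Minkowski formula, the two weighted divergence identities (the paper's (\ref{eq1}) and (\ref{eq2})), the iteration $T_k=\sigma_k I-T_{k-1}h$, and the Newton--MacLaurin inequality; your claimed reduction of $\mathcal{Q}'(t)$ to $4\int uH_{k-1}-2\int H_{k-1}^2/H_k-2\int r^2H_k$ agrees exactly with the paper's formula in Remark 3.1 once one expands $-2\int H_k(H_{k-1}/H_k-u)^2-\tfrac12\int H_k|\nabla r^2|^2$ using $|\nabla r^2|^2=4(r^2-u^2)$. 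The only cosmetic difference is that you evaluate the limit via the conserved quermassintegral and the limit radius, whereas the paper forms the scale-invariant ratio $Q_k(t)$; these are equivalent.
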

\begin{rema}
\item $(1)$ When $k=1$, we make a convention that $\int_{\Sigma} H_{-1}d\mu= nVol(\Omega)$. Thus (\ref{W1}) reduces to  Gir\~{a}o and Rodrigues's result (\ref{Girao}).
\item $(2)$ We remark that quite recently Kwong and Wei \cite{KW} obtained a slightly stronger result than (\ref{W1}) with a proof also using Kwong-Miao's results (\ref{Miao1}) and (\ref{Miao2}), while our proof presented here does not rely on them. As in turn, our proof provides another approach to the proof of Kwong-Miao's results (\ref{Miao1}) and (\ref{Miao2}).
\end{rema}

\section{Preliminaries}

Let $(\Sigma^{n-1},g)$ be a hypersurface in $\R^{n}$ with $g$ the induced metric and $h$ its second fundamental form. Let $\{x^1, \cdots x^{n-1}\}$ be a local coordinate system of $\Sigma$  and denote by $g_{ij}=g(\partial_i, \partial_j)$ and $h_{ij}=h(\partial_i, \partial_j)$ . The pricipal curvature $\kappa=(\kappa_1, \cdots, \kappa_{n-1})$ of $\Sigma$ are the eigenvalues of shape opertor $h_i^j$, where $h_i^j=g^{jk}h_{ik}$ and $(g^{ij})$ is the inverse matrix of $(g_{ij})$.  The $k$-th Newton operator of $h_i^j$ is defined by
\begin{equation}\label{Newtondef}
(T_k)^{i}_{j}:=\frac{\partial \s_{k+1}}{\partial h^{j}_{i}}.
\end{equation}

The following basic formulas about $\s_k$ and $T$ are well-known, see e.g. \cite{GuanLi, Reilly}.
\begin{eqnarray}\label{sigmak}
\s_k(h)& =&\ds\frac{1}{k!}\d^{i_1\cdots i_k}
_{j_1\cdots j_k}h_{i_1}^{j_1}\cdots
{h_{i_k}^ {j_k}}=\frac{1}{k} \tr(T_{k-1}(h)h),
\\
(T_k)^{i}_j(h) & =& \ds \frac{1}{k!}\d^{ii_1\cdots i_{k}}
_{j j_1\cdots j_{k}}h_{i_1}^{j_1}\cdots
{h_{i_{k}}^{j_{k}}}\label{Tk},
\end{eqnarray}
where $\d^{i_1i_2\cdots i_{k}}
_{j_1j_2\cdots j_{k}}$ is the generalized Kronecker delta given by
\begin{equation}\label{generaldelta}
 \d^{i_1i_2 \cdots i_k}_{j_1j_2 \cdots j_k}=\det\left(
\begin{array}{cccc}
\d^{i_1}_{j_1} & \d^{i_2}_{j_1} &\cdots &  \d^{i_k}_{j_1}\\
\d^{i_1}_{j_2} & \d^{i_2}_{j_2} &\cdots &  \d^{i_k}_{j_2}\\
\vdots & \vdots & \vdots & \vdots \\
\d^{i_1}_{j_k} & \d^{i_2}_{j_k} &\cdots &  \d^{i_k}_{j_k}
\end{array}
\right).
\end{equation}

  We recall the following lemma for smooth hypersurfaces in $\R^n$, see for example \cite{Guan, GuanLi2}.
  \begin{lemm}
  Let $(\Sigma^{n-1},g)$ be a smooth closed hypersurface in $\R^n$. Then there holds
  \begin{equation}\label{nabla}
  \nabla_i(T^{k-1}_{ij}\nabla_j r^2)=2((n-k)\sigma_{k-1}-k\sigma_{k}u)=2kC_{n-1}^k(H_{k-1}-uH_k),
  \end{equation}
  where $\nabla$ is the covariant derivative with respect to the metric $g$.
  \end{lemm}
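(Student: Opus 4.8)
The plan is to establish the identity \emph{pointwise} by computing the Hessian of $r^2$ on $\Sigma$ and contracting it against the Newton tensor $T_{k-1}$, using crucially that $T_{k-1}$ is divergence free because the ambient space is flat. Without loss of generality I place the reference point $O$ at the origin, so that $r^2=\langle X,X\rangle$ with $X$ the position vector. Writing $e_i=\partial_i X$ for the coordinate tangent frame and lowering/raising indices with $g$, the tangential gradient is immediately $\nabla_i r^2=2\langle e_i,X\rangle$.

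The first step is the full Hessian. Differentiating once more, $\nabla_j\nabla_i r^2=\partial_j(2\langle e_i,X\rangle)-2\Gamma_{ji}^k\langle e_k,X\rangle$, and inserting the Gauss formula $\bar\nabla_{e_j}e_i=\Gamma_{ji}^k e_k-h_{ij}\nu$ together with $u=\langle X,\nu\rangle$, the Christoffel contributions cancel and one is left with
\begin{equation*}
\nabla_i\nabla_j r^2=2g_{ij}-2u\,h_{ij}.
\end{equation*}
The sign in front of $u\,h_{ij}$ is pinned down by the orientation convention under which a round sphere centered at $O$ is $k$-convex; as a consistency check, on such a sphere $h_{ij}=g_{ij}/R$ and $u=R$, so the right-hand side vanishes, matching the fact that $r^2$ is constant there.

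The key structural input is that in $\mathbb{R}^n$ each Newton tensor is divergence free, $\nabla_i(T_{k-1})^{i}_{\ j}=0$. This follows from the explicit expression (\ref{Tk}) for $T_{k-1}$ in terms of the generalized Kronecker delta together with the Codazzi equation $\nabla_i h_{jl}=\nabla_j h_{il}$: the total antisymmetrization in the upper indices built into $\delta$ annihilates the \emph{symmetric} covariant derivative of $h$. Granting this, the outer divergence passes through $T_{k-1}$, giving
\begin{equation*}
\nabla_i\big((T_{k-1})^{ij}\nabla_j r^2\big)=(T_{k-1})^{ij}\nabla_i\nabla_j r^2=(T_{k-1})^{ij}\big(2g_{ij}-2u\,h_{ij}\big).
\end{equation*}

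It then remains to evaluate the two traces. The first, $(T_{k-1})^{ij}g_{ij}=\tr T_{k-1}=(n-k)\sigma_{k-1}$, is read off from (\ref{Tk}) by contracting the outer index pair and using the standard Kronecker-delta contraction identity in dimension $n-1$; the second, $(T_{k-1})^{ij}h_{ij}=\tr(T_{k-1}h)=k\sigma_k$, is precisely the identity $\sigma_k=\tfrac1k\tr(T_{k-1}h)$ recorded in (\ref{sigmak}). Substituting yields $2\big((n-k)\sigma_{k-1}-k\sigma_k u\big)$, and the conversion $\sigma_{k-1}=C_{n-1}^{k-1}H_{k-1}$, $\sigma_k=C_{n-1}^{k}H_k$ together with the elementary identity $(n-k)C_{n-1}^{k-1}=k\,C_{n-1}^{k}$ produces the stated form $2k\,C_{n-1}^{k}(H_{k-1}-uH_k)$. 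The only nonroutine ingredient is the divergence-free property of $T_{k-1}$, which is exactly where the flatness of $\mathbb{R}^n$ (via Codazzi) is genuinely used; everything else is a direct trace computation.
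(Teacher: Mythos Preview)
Your argument is correct and is in fact the standard route to this identity: compute the intrinsic Hessian $\nabla_i\nabla_j r^2=2g_{ij}-2u\,h_{ij}$ via the Gauss formula, pass the divergence through $T_{k-1}$ using its Codazzi-based divergence-free property, and evaluate the two traces $\tr T_{k-1}=(n-k)\sigma_{k-1}$ and $\tr(T_{k-1}h)=k\sigma_k$. The paper itself does not supply a proof of this lemma but only quotes it with a reference to \cite{Guan,GuanLi2}, so there is nothing to compare against; your write-up simply fills in what the paper leaves to the literature.
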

Integrating above equality, we get the well-known Minkowski formulas in $\R^n$:
\begin{equation}\label{Minkowski}
\int_{\Sigma} u H_{k}d\mu= \int_{\Sigma} H_{k-1} d\mu.
\end{equation}
We also need the following integral equalities which play an important role in our proof.

\begin{prop}\label{prop1}
Let $\Sigma^{n-1}$ be a closed hypersurface in the Euclidean space $\R^n$. For $k= 1,\cdots,n-1 $, we have
\begin{equation}\label{eq1}
\int_{\Sigma}r^2 u H_kd\mu= \int_{\Sigma}r^2H_{k-1}d\mu+\frac{1}{2kC_{n-1}^k}\int_{\Sigma}(T_{k-1})^{ij}\nabla_i (r^2) \nabla_j (r^2)d\mu,
\end{equation}
and
\begin{equation}\label{eq2}
\int_{\Sigma}{u}^2H_k d\mu=\int_{\Sigma}{u}H_{k-1} d\mu+\frac{1}{4kC_{n-1}^k} \int_{\Sigma} (T_{k-1})^{ij}h_i^l\nabla_j (r^2) \nabla_l (r^2)d\mu.
\end{equation}
\end{prop}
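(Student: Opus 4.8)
The plan is to prove the two identities by a ``polarization'' of the divergence formula \eqref{nabla} from the lemma, combined with integration by parts on the closed hypersurface $\Sigma$. The key observation is that the left-hand sides of \eqref{eq1} and \eqref{eq2} differ from the ordinary Minkowski formula \eqref{Minkowski} precisely by factors of $r^2$ and $u$, so I expect the extra terms to come from transferring derivatives onto these factors via the Newton tensor. Two elementary facts will be used repeatedly: first, on a hypersurface in $\R^n$ one has $\nabla_i(r^2) = 2\langle X,\partial_i\rangle$ (the tangential part of $2X$), and second, the support function satisfies $\nabla_i u = \nabla_i\langle X,\nu\rangle = h_i^l\nabla_l\langle X, X\rangle/2 = \tfrac12 h_i^l\nabla_l(r^2)$ after using the Weingarten relation $\nabla_i\nu = h_i^l\partial_l$ and $\nabla_i X = \partial_i$. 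This last formula is exactly what produces the $h_i^l$ inside the second integrand of \eqref{eq2}, which is the structural hint that guides the whole computation.

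For \eqref{eq1}, I would start from the divergence identity \eqref{nabla}, rewrite its right-hand side as $2kC_{n-1}^k(H_{k-1}-uH_k)$, multiply both sides by $r^2$, and integrate over $\Sigma$. The term $\int_\Sigma r^2 \nabla_i((T_{k-1})^{ij}\nabla_j(r^2))\,d\mu$ is integrated by parts (no boundary term since $\Sigma$ is closed), yielding $-\int_\Sigma (T_{k-1})^{ij}\nabla_i(r^2)\nabla_j(r^2)\,d\mu$. Here I need the standard fact that the Newton tensor $T_{k-1}$ is divergence-free in $\R^n$ (a consequence of the Codazzi equations in a space form), so that moving $\nabla_i$ off the product and onto $r^2$ alone is legitimate and clean. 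Rearranging gives exactly \eqref{eq1} after dividing by $2kC_{n-1}^k$.

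For \eqref{eq2}, the same strategy applies but now I multiply the divergence identity \eqref{nabla} by $u$ rather than $r^2$ before integrating. Integration by parts on $\int_\Sigma u\,\nabla_i((T_{k-1})^{ij}\nabla_j(r^2))\,d\mu$ produces $-\int_\Sigma (T_{k-1})^{ij}\nabla_i u\,\nabla_j(r^2)\,d\mu$, and then substituting $\nabla_i u = \tfrac12 h_i^l\nabla_l(r^2)$ turns this into $-\tfrac12\int_\Sigma (T_{k-1})^{ij} h_i^l \nabla_l(r^2)\nabla_j(r^2)\,d\mu$. The right-hand side of \eqref{nabla} multiplied by $u$ and integrated gives $2kC_{n-1}^k\int_\Sigma(uH_{k-1}-u^2H_k)\,d\mu$; collecting terms and dividing by $4kC_{n-1}^k$ yields \eqref{eq2}. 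I should double-check the symmetrization: since $(T_{k-1})^{ij}$ is symmetric and $h_i^l$ is $g$-symmetric, the quadratic form $(T_{k-1})^{ij}h_i^l\nabla_l(r^2)\nabla_j(r^2)$ is unambiguous, which matches the statement.

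The main obstacle, and the only place where care is genuinely needed, is justifying that $\nabla_i (T_{k-1})^{ij}=0$ in $\R^n$ and correctly bookkeeping the constant $2kC_{n-1}^k$ that appears in \eqref{nabla}; everything else is bona fide integration by parts. A secondary subtlety is the convention for $k=1$, where $T_0 = \mathrm{Id}$, $\sigma_0 = 1$, and the formula \eqref{nabla} reduces to $\Delta r^2 = 2((n-1) - \sigma_1 u) = 2(n-1)(H_0 - uH_1)$; one should check that \eqref{eq1} and \eqref{eq2} still read correctly in that case (they do, with $H_{-1}$ not appearing here since the right-hand side of \eqref{eq1}–\eqref{eq2} only involves $H_{k-1}$). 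Once these identities are in hand, they feed directly into the monotonicity computation along the normalized flow \eqref{flow1} used to prove Theorem~1.5.
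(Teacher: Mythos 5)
Your proposal is correct and follows essentially the same route as the paper: multiply the divergence identity \eqref{nabla} by $r^2$ (resp.\ $u$), integrate by parts on the closed hypersurface, and for \eqref{eq2} substitute $\nabla_i u=\tfrac12 h_i^l\nabla_l(r^2)$; the constants work out exactly as you describe. One minor remark: the divergence-free property of $T_{k-1}$ is not actually needed for the integration-by-parts step itself (which is just the divergence theorem applied to $\nabla_i\bigl(f\,(T_{k-1})^{ij}\nabla_j(r^2)\bigr)$) --- it is already built into the derivation of \eqref{nabla}.
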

\begin{proof}
The proof is quite simple. First multiplying (\ref{nabla}) by the function $r^2$ and integrating by parts,
(\ref{eq1}) follows immediately.  Similarly, multiplying (\ref{nabla}) by the support function $u$ and integrating by parts, we obtain
(\ref{eq2}) by  the fact (see \cite[Lemma 2.6]{GuanLi2} for instance)
$$\nabla_i u= h_i^l \nabla_l\bigg(\frac 12 r^2\bigg).$$
\end{proof}

We recall the following variational formula for the quermassintegral (see e.g. \cite{BC,GuanLi}):
\begin{equation}\label{variation}
\frac{d}{dt} \int_{\Sigma_t}H_{k-1} d\mu_t=(n-k)\int_{\Sigma_t} H_{k}F d\mu_t,\qquad k=1,\cdots n-1
\end{equation}
along the general normal variation
$$\frac{\partial}{\partial t}X=F\nu.$$

Finally, we need a general variation equation for the $r^2$-weighted curvature integral $\int_{\Sigma_t} r^2 H_k$ as following:
\begin{lemm}
Let $\Sigma_t$ be a smooth family of closed hypersurfaces in the Euclidean space $\mathbb{R}^n$ satisfying a general flow
\begin{equation}\label{general flow}
\frac {\partial}{\partial t} X=F\nu,
\end{equation}
where $\nu$ is the outward unit normal of $\Sigma_t$ and $F$ is a smooth function on $\Sigma_t$. For all $k=1,\cdots, n-1$, we have
\begin{equation}\label{Vpk}
\frac{\partial}{\partial t}\int_{\Sigma}r^2 H_k d\mu_t=\int_{\Sigma}\bigg((n-1-k)\,r^2 H_{k+1}+2(k+1){u} H_k -2kH_{k-1}\bigg)Fd\mu_t.
\end{equation}
\end{lemm}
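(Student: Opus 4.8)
The plan is to differentiate the weighted integral $\int_{\Sigma_t}r^2H_k\,d\mu_t$ directly, using the standard first-variation formulas along the flow together with the divergence identity (\ref{nabla}). Since $\int_{\Sigma_t}r^2H_k\,d\mu_t=(C^k_{n-1})^{-1}\int_{\Sigma_t}r^2\sigma_k\,d\mu_t$, it is cleaner to work with $\sigma_k$ throughout and convert back at the very end via $\sigma_j=C^j_{n-1}H_j$.

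The first step is to assemble the pointwise evolution equations under $\partial_t X=F\nu$. With $\nu$ the outward normal one has $\partial_t g_{ij}=2Fh_{ij}$, hence $\partial_t(d\mu_t)=\sigma_1F\,d\mu_t$; and, since $r^2=|X|^2$ with $O$ taken to be the origin, $\partial_t(r^2)=2\langle X,F\nu\rangle=2uF$. For the curvature factor, starting from $\partial_t h^j_i=-\nabla_i\nabla^jF-F(h^2)^j_i$ (the sign being fixed by the convention in which round spheres are convex) and $\partial\sigma_k/\partial h^j_i=(T_{k-1})^i_j$, and using that $(T_{k-1})^{ij}$ is divergence-free in $\R^n$ together with the Newton identity $\mathrm{tr}(T_{k-1}h^2)=\sigma_1\sigma_k-(k+1)\sigma_{k+1}$, one gets
\begin{equation*}
\partial_t\sigma_k=-\nabla_i\big((T_{k-1})^{ij}\nabla_jF\big)-F\big(\sigma_1\sigma_k-(k+1)\sigma_{k+1}\big).
\end{equation*}

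Next, differentiating under the integral sign and substituting,
\begin{equation*}
\partial_t\int_\Sigma r^2\sigma_k\,d\mu_t=\int_\Sigma\Big(2uF\sigma_k+r^2\partial_t\sigma_k+r^2\sigma_k\sigma_1F\Big)\,d\mu_t,
\end{equation*}
the $r^2\sigma_1\sigma_k$ contributions cancel, and there remain the term $2\int u\sigma_kF$, the term $(k+1)\int r^2\sigma_{k+1}F$, and the divergence term $-\int r^2\nabla_i\big((T_{k-1})^{ij}\nabla_jF\big)$. I would integrate this last term by parts twice, transferring both derivatives onto $r^2$: the boundary terms vanish since $\Sigma$ is closed, and the terms in which a derivative lands on $T_{k-1}$ vanish by its divergence-freeness, so it equals $-\int F\,\nabla_i\big((T_{k-1})^{ij}\nabla_j r^2\big)\,d\mu_t=-2\int F\big((n-k)\sigma_{k-1}-k\sigma_ku\big)\,d\mu_t$ by (\ref{nabla}). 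Collecting,
\begin{equation*}
\partial_t\int_\Sigma r^2\sigma_k\,d\mu_t=\int_\Sigma\Big(2(k+1)u\sigma_k-2(n-k)\sigma_{k-1}+(k+1)r^2\sigma_{k+1}\Big)F\,d\mu_t,
\end{equation*}
and dividing by $C^k_{n-1}$ and using the binomial identities $(n-k)C^{k-1}_{n-1}=kC^k_{n-1}$ and $(k+1)C^{k+1}_{n-1}=(n-1-k)C^k_{n-1}$ yields exactly (\ref{Vpk}).

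There is no deep obstacle; the argument is a bookkeeping exercise in first-variation formulas, and one could alternatively just cite the evolution of $\sigma_k$ from the literature. The one place that genuinely requires care is the choice of sign conventions (outward normal, and the sign of $h$ making round spheres convex, which the paper's use of $k$-convexity presupposes): this feeds into the sign of $\partial_t\sigma_k$ and hence, after the two integrations by parts, decides whether the $uH_k$ and $H_{k-1}$ coefficients come out as in (\ref{Vpk}) or off by a multiple of $\int(H_{k-1}-uH_k)F\,d\mu_t$, which is generally nonzero. A convenient consistency check is to evaluate both sides of (\ref{Vpk}) on round spheres flowing by a constant $F$, where every term is explicit.
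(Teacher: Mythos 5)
Your proposal is correct and follows essentially the same route as the paper: compute $\partial_t(r^2)=2uF$, $\partial_t(d\mu_t)=\sigma_1F\,d\mu_t$, and the standard evolution of $\sigma_k$, then integrate the divergence term by parts twice and invoke the identity (\ref{nabla}) before renormalizing by $C^k_{n-1}$. The only cosmetic difference is that you state the binomial identities explicitly in the final step, which the paper leaves implicit.
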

\begin{proof}
The variational formula (\ref{Vpk}) is  established in \cite{WZ}. For the convenience of readers, we include the proof here.
First note that
\begin{equation*}
\frac{\partial}{\partial t} r^2 = \langle\bar\nabla r^2,\frac{\partial}{\partial t} X \rangle
 = 2\langle r\partial r, F\nu\rangle = 2 Fu,
\end{equation*}
where $\bar\nabla$ is the covariant derivative with respct to the Euclidean metric.

In view of (\ref{nabla}) and the divergence-free property of $T_{k-1}$, we have along the flow (\ref{general flow}) that
\begin{align}
\frac{d}{d t}\int_{\Sigma_t} r^2 \sigma_k d\mu_t&=\int_{\Sigma_t}\bigg(\frac{\partial}{\partial t} r^2\bigg)\sigma_k d\mu_t+\int_{\Sigma_t}r^2\frac{\partial\sigma_k}{\partial t} d\mu_t+\int_{\Sigma_t}r^2\sigma_k (F\sigma_1)d\mu_t\nonumber\\
&=\int_{\Sigma_t}2 Fu\sigma_k\!+\!\int_{\Sigma_t}\!r^2\bigg(\!-\!T_{k-1}^{ij}\nabla_i\nabla_j F\!-\!F\big(\sigma_1\sigma_k\!-\!(k\!+\!1)\sigma_{k+1}\big)\bigg)d\mu_t
\!+\!\int_{\Sigma_t}\!r^2 F\sigma_k\sigma_1 d\mu_t\nonumber\\
&=\int_{\Sigma}\bigg(2 u\sigma_k- \nabla_i(T_{k-1}^{ij} \nabla_j r^2) +(k+1)r^2 \sigma_{k+1}\bigg) F d\mu_t\nonumber\\
&=\int_{\Sigma}\big(2(k+1)u\sigma_k + (k+1) r^2\sigma_{k+1}- 2(n-k)\sigma_{k-1}\big) F d\mu_t.\nonumber
\end{align}
Thus
(\ref{Vpk}) follows from the normalization
$H_k=\frac{\s_k}{C_{n-1}^k}.$
\end{proof}

\section{Proof of the main results}
In this section, we give a direct proof of (\ref{W1}) without using Kwong-Miao's results (\ref{Miao1}) and (\ref{Miao2}). The idea is to choose the normalized flow (\ref{flow1}) of the inverse curvature flow rather than the un-normalized one.
\medskip

{\it The proof of Theorem 1.3}:

For all $k=1,\cdots, n-1$, first by choosing
    $$F =\frac {H_{k-1}}{H_k}-u,$$
in (\ref{general flow}), we derive from (\ref{variation}) and (\ref{Vpk}) that
\begin{align*}
&\frac{d}{dt} \bigg(\int_{\Sigma_t} r^2 H_k d\mu_t + \frac{2(k-1)}{n+1-k}\int_{\Sigma} H_{k-2} d\mu_t\bigg)\\
=& \int_{\Sigma_t}\bigg((n-1-k) r^2 H_{k+1}+ 2(k+1)u H_k-2kH_{k-1}+2(k-1)H_{k-1}\bigg)\bigg(\frac {H_{k-1}}{H_k}-u\bigg)d\mu_t\\
=& \int_{\Sigma_t}\bigg((n-1-k) r^2 H_{k+1}+ 2k u H_k+2(uH_k-H_{k-1})\bigg)\bigg(\frac {H_{k-1}}{H_k}-u\bigg)d\mu_t\\
=&\, (n-1-k)\int_{\Sigma_t}r^2 \bigg( \frac{H_{k+1}H_{k-1}}{H_k}-u H_{k+1}\bigg)d\mu_t+2k\int_{\Sigma_t}(uH_{k-1}-u^2 H_k)d\mu_t\\
&-2\int_{\Sigma_t} H_{k}\bigg(\frac{H_{k-1}}{H_k}-u\bigg)^2 d\mu_t\\
\leq&(n-1-k)\int_{\Sigma_t}r^2 \bigg( H_k-u H_{k+1}\bigg)d\mu_t -\frac{1}{2C_{n-1}^k} \int_{\Sigma_t} (T_{k-1})^{ij}h_i^l\nabla_j (r^2) \nabla_l (r^2)d\mu_t,
\end{align*}
where in the last inequality we used (\ref{eq2}) and the Newton-MacLaurin inequality
$$H_{k+1}H_{k-1}\leq H_k^2.$$
To deal with the second term in the above inequality, we apply the  well-known iteration of the Newton operator
$$T_k=\sigma_k I-T_{k-1}h,$$
to compute that
\begin{align*}
&\frac{d}{dt} \bigg(\int_{\Sigma_t} r^2 H_k d\mu_t + \frac{2(k-1)}{n+1-k}\int_{\Sigma} H_{k-2} d\mu_t\bigg)\\
\leq&~(n-1-k)\int_{\Sigma_t}r^2 \bigg( H_k-u H_{k+1}\bigg)d\mu_t -\frac{1}{2kC_{n-1}^k} \int_{\Sigma_t} \bigg(-(T_{k})^{jl}+\sigma_k g^{jl}\bigg)\nabla_j (r^2) \nabla_l (r^2)d\mu_t\\
=&~(n-1-k)\int_{\Sigma_t}r^2 \bigg( H_k-u H_{k+1}\bigg)d\mu_t+
\frac{2(k+1)C_{n-1}^{k+1}}{2C_{n-1}^k} \int_{\Sigma_t} r^2(uH_{k+1}-H_k)d\mu_t\\
&-\frac{1}{2C_{n-1}^k} \int_{\Sigma_t} \sigma_k g^{jl} \nabla_j (r^2) \nabla_l (r^2)d\mu_t\\
=&-\frac{1}{2} \int_{\Sigma_t} H_k |\nabla (r^2)|^2d\mu_t,
\end{align*}
where we plugged into (\ref{eq1}) in the second equality.
Since $H_k\geq 0$,  we derive
\begin{equation}\label{eq5}
 \frac{d}{dt} \bigg(\int_{\Sigma_t} r^2 H_k d\mu_t + \frac{2(k-1)}{n+1-k}\int_{\Sigma} H_{k-2} d\mu_t\bigg)\leq 0.
\end{equation}
On the other hand, the $k$th quermassintegral is preserved by the Minkowski formula (\ref{Minkowski}) that
\begin{equation}\label{eq6}
\frac{d}{dt} \int_{\Sigma_t} H_{k-1} d\mu_t=(n-1-k)\int_{\Sigma_t} H_{k}\big(\frac {H_{k-1}}{H_k}-u\big)d\mu_t=0.
\end{equation}

Combining the (\ref{eq5}) and (\ref{eq6}) together, we have for each $k=1,\cdots, n-1$,  the function
$$Q_k(t)= \bigg(\frac{\int _{\Sigma} H_{k-1}}{\omega_{n-1}}\bigg)^{-\frac{n-k+1}{n-k}}\bigg(\int_{\Sigma_t} r^2 H_k d\mu_t + \frac{2(k-1)}{n+1-k}\int_{\Sigma} H_{k-2} d\mu_t\bigg)\\$$
is monotone decreasing along the normalized flow (\ref{flow1}).

If the initial hypersurface $\Sigma$ is star-shaped and $k$-convex, Gerhardt \cite{Gerhardt} an Urbas\cite{Urbas} proved that star-shapeness and the $k$-convexity are preserved and the solution $\Sigma_t$ expands to infinity and properly rescaled solution converges to a round sphere $S_{r_{\infty}}$ as $t\rightarrow \infty$. Note that the quantity $Q_k(t)$ is a scaling invariant, we have
$$Q_k(0)\geq \lim_{t\rightarrow\infty} Q_k(t)=\frac{n+k-1}{n-k+1}\omega_{n-1}.$$

Therefore we obtain the inequality (\ref{Girao}) and (\ref{Wei1}) for a smooth, star-shaped and  $(k+1)$-convex hypersurface in $\R^n.$ Moreover if equality holds, then
$\frac{d}{dt} Q_k(t)=0$ holds on the solution of $\Sigma_t$ of the flow (\ref{flow1}) for all time t which implies that the initial hypersurface $\Sigma$ is a coordinate sphere. This completes the proof.
\qed

\begin{rema}
  From the above proof, we actually have the following integral formula along the flow
 $$
\frac{\partial}{\partial t} X=\bigg(\frac {H_{k-1}}{H_k}-u\bigg)\nu,
$$
that
  \begin{align*}
&\frac{d}{dt} \bigg(\int_{\Sigma_t} r^2 H_k d\mu_t + \frac{2(k-1)}{n+1-k}\int_{\Sigma} H_{k-2} d\mu_t\bigg)\\
=&\, (n-1-k)\int_{\Sigma_t}r^2 \bigg( \frac{H_{k+1}H_{k-1}}{H_k}-H_k\bigg)d\mu_t-2\int_{\Sigma_t} H_{k}\bigg(\frac{H_{k-1}}{H_k}-u\bigg)^2 d\mu_t-\frac{1}{2} \int_{\Sigma_t} H_k |\nabla (r^2)|^2d\mu_t,
\end{align*}
which may have the independent interest.
\end{rema}


\begin{thebibliography}{abcde}
\bibitem{AF1} A.D. Alexandrov, {\it Zur Theorie der gemischten Volumina von konvexen K\"orpern, II. Neue Ungleichungen zwischen den gemischten Volumina und ihre Anwendungen}, Mat. Sb. (N.S.) 2 (1937) 1205--1238 (in Russian).

\bibitem{AF2} A.D. Alexandrov, {\it Zur Theorie der gemischten Volumina von konvexen K\"orpern, III. Die Erweiterung zweeier Lehrsatze Minkowskis \"uber die konvexen Polyeder auf beliebige konvexe Flachen}, Mat. Sb. (N.S.) 3 (1938) 27--46 (in Russian).

\bibitem{BC} J. L. M. Barbosa and A. G. Colares, {\it Stability of hypersurfaces with constant r-mean
curvature,} Ann. Global Anal. Geom. \textbf{15} (1997), no. 3, 277-297.

\bibitem{GR}F. Gir\~{a}o and D. Rodrigues, {\it Weighted geometric inequalities for hypersurfaces in sub-static manifolds},  Bull. Lond. Math. Soc. \textbf{52}(2020), 121-136.

\bibitem{Gerhardt} C. Gerhardt, {\it Flow of nonconvex hypersurfaces into spheres}, J. Differential Geom. \textbf{32} (1990), 299--314.

\bibitem{Guan} P. Guan, {\it Topics in Geometric Fully Nonlinear Equations,} Lecture Notes, http://www.math.mcgill.ca/guan/notes.html.

\bibitem{GuanLi} P. Guan and J. Li, {\it The quermassintegral inequalities for k-convex starshaped domains,} Adv. Math. \textbf{221}(2009), 1725--1732.

\bibitem{GuanLi2} P. Guan and J. Li, {\it A mean curvature type flow in space forms,} Int. Math. Res. Notices \textbf{2015} (2015), 4716-4740.


\bibitem{Kwong} K.-K. Kwong,{\it An extension of Hsiung-Minkowski formulas and some applications,}
J. Geom. Anal \textbf{26}(2016), no.1, 1-23

\bibitem{KM1} K.-K. Kwong and P. Miao, {\it A New monotone quantity along the inverse mean curvature Flow in $\mathbb R^n$,}
Pacific J.Math. \textbf{267}(2014), no.2,417-422.

\bibitem{KM2} K.-K. Kwong and P. Miao, {\it Monotone quantities involving a weighted $\sigma_k$ integral along inverse curvature flows,}
Commun. Contemp. Math.\textbf{17}(2015), no.5, 1550014.

\bibitem{KW} K.-K. Kwong and Y. Wei, {\it Geometric inequalities involving three quantites in warped product manifolds,}
Advances in Math.\textbf{430}(2023), no.5, 109213.

\bibitem{Reilly}  R. Reilly, {\it Variational properties of functions of the mean curvatures for hypersurfaces in space forms}, J. Differential Geom. \textbf{8}: (1973) 465-477.

\bibitem{Urbas} J. Urbas, {\it On the expansion of starshaped hypersurfaces by symmetric functions of their principal curvatures,} Math. Z. \textbf{205} (1990), 355-372.

\bibitem{WZ} Y. Wei and T. Zhou, {\it New weighted geometric inequalities for hypersurfaces in space forms,} Bull. Lond. Math. Soc. \textbf{55}(2023), 268-281.


\end{thebibliography}
\end{document}